\newtheorem{theorem}{Theorem}[section]
\newtheorem{proposition}{Proposition}[section]
\newtheorem{lemma}{Lemma}[section]
\newtheorem{corollary}{Corollary}[section]
\theoremstyle{remark}
\newtheorem{remark}{Remark}[section]
\theoremstyle{definition}
\newcommand{\real}{\mathbb{R}}
\newcommand{\s}{\mathbb{S}}
\newcommand{\LL}{\mathcal{L}}
\newcommand{\n}{\nabla}
\newcommand{\ran}{\rangle}
\newcommand{\lan}{\langle}
\newcommand{\ve}{\varepsilon}
\newcommand{\vp}{\varphi}
\DeclareMathOperator{\hess}{Hess}
\DeclareMathOperator{\ric}{Ric}
\DeclareMathOperator{\di}{div}
\DeclareMathOperator{\tr}{trace}
\DeclareMathOperator{\grad}{grad}
\DeclareMathOperator{\dist}{dist}
\DeclareMathOperator{\proj}{proj}
\numberwithin{equation}{section}
\title[Self-shrinkers tangent spaces omit a set]{Rigidity of complete self-shrinkers whose tangent planes omit a nonempty set}
\author{Hil\'ario Alencar, Manuel Cruz \& Greg\'orio Silva Neto}
\date{April 15, 2023}
\address{Instituto de Matemática, Universidade Federal de Alagoas, Macei\'o, AL, 57072-900, Brasil}
\email{hilario@mat.ufal.br}
\address{Instituto de Matemática, Universidade Federal de Alagoas, Macei\'o, AL, 57072-900, Brasil}
\email{mbrutusc@gmail.com}
\address{Instituto de Matemática, Universidade Federal de Alagoas, Macei\'o, AL, 57072-900, Brasil}
\email{gregorio@im.ufal.br}
\begin{document}
\subjclass[2020]{Primary 53C42; Secondary 53E10; 53A10; 35J15; 58J05}

\keywords{Self-shrinkers, self-expanders, mean curvature flow, tangent space, maximum principle}
 
\footnotetext{Hil\'ario Alencar is the corresponding author. E-mail: hilario@mat.ufal.br.\\ 
Hil\'ario Alencar and Greg\'orio Silva Neto were partially supported by the National Council for Scientific and Technological Development - CNPq of Brazil. Manuel Cruz was partially supported by Coordena\c c\~ao de Aperfei\c coamento de Pessoal de N\'ivel Superior - Brasil (CAPES) - Finance Code 001.}

\begin{abstract}
In this paper we prove rigidity results for the sphere, the plane and the right circular cylinder as the only self-shrinkers satisfying a classic geometric assumption, namely the union of all tangent affine submanifolds of a complete self-shrinker omits a non-empty set of the Euclidean space. This assumption lead us to a new class of submanifolds, different from those with polynomial volume growth or the proper ones. We also prove an analogous result for self-expanders.
\end{abstract}
\maketitle

\section{Introduction}

A $n$-dimensional submanifold $X:\Sigma^n\to\real^{n+k}, n\geq2,\ k\geq1,$ is called a self-shrinker if it satisfies
\[
{\bf H}=-\frac{1}{2} X^\perp,
\]
where ${\bf H}=\sum_{i=1}^n \alpha(e_i,e_i)$ is the mean curvature vector field of $\Sigma^n$ and $X^\perp$ is the part of $X$ normal to $\Sigma^n.$ 

Self-shrinkers are self-similar solutions of the mean curvature flow and plays an important role in the study of this flow since they are type I singularities of the flow, see \cite{Colding-Minicozzi}. The simplest examples of self-shrinkers are the round spheres, planes and cylinders. Moreover, there are many results which present these examples as the only self-shrinkers satisfying some geometric restrictions, see \cite{Colding-Minicozzi}, \cite{Cao-Li}, \cite{AS}, \cite{DX}, and \cite{DXY}. In commom, all these results have the assumption that, when the self-shrinker is not compact, it must have polynomial volume growth or it must be proper. In \cite{CZ}, Cheng and Zhou proved that a self-shrinker has polynomial volume growth if and only if it is proper. Recently, joinly with Vieira, see \cite{CVZ}, they generalized this result for submanifolds with bounded weighted mean curvature in a wide class of shrinking gradient Ricci solitons, which includes the Gaussian soliton. In particular, Cheng-Vieira-Zhou result gives that, for a surface with bounded ${\bf H}+\frac{1}{2} X^\perp,$ polynomial volume growth is equivalent to the properness of the submanifold (see Theorems 1.3 and 1.4 of \cite{CVZ}).

In this paper, we prove the rigidity of the sphere, the cylinders and the affine subspaces passing through the origin as the only self-shrinkers under another classic geometric assumption we describe below. Here and elsewhere, we identify the tangent spaces $T_p\Sigma^n$ with the affine subspace $X(p)+dX_p(T_p\Sigma^n),$ tangent to $X(\Sigma)$ at $X(p)$.

Let us denote by 
\[
W=\real^{n+k}\backslash\bigcup_{p\in\Sigma^n} T_p\Sigma^n
\]
the set omitted by the union of the affine subspaces tangent to $X(\Sigma^n)\subset\real^{n+k}$. Here, we purpose to classify the self-shrinkers with nonempty $W.$ The study of submanifolds of the Euclidean space with non-empty $W$ started with Halpern, see \cite{halpern}, who proved that compact and oriented hypersurfaces of the Euclidean space have nonempty $W$ if and only if it is embedded, diffeomorphic to the sphere and it is the boundary of a star-shaped domain of $\real^{n+1}.$ Therefore, since the only self-shrinker with these characteristics are the round spheres of radius $\sqrt{2n}$ (see \cite{Huisken}), the case of compact self-shrinkers of codimension one with nonempty $W$ is completely solved. 

In the non-compact case there are many examples of hypersufaces with nonempty $W.$ In fact, cylinders and paraboloids have open and nonempty $W$ and the one sheet hyperboloid has $W=\{0\}$. Surprisingly, in dimension two, if $\Sigma^2$ is a minimal surface of $\real^3,$ then Hasanis and Koutrofiotis, see \cite{H-K}, proved that $W\neq\emptyset$ if and only if $\Sigma^n$ is a plane (in fact the result holds for arbitrary codimension, provided $X^\perp/\|X^\perp\|$ is paralell at the normal bundle). For higher dimensions, Alencar and Frensel, see \cite{A-F}, proved that the same result holds in higher dimension hypersurfaces of $\real^{n+1}$ (i.e., $\Sigma^n$ is a hyperplane), assuming in addition that $W$ is open. Other rigidity results involving $W$ and geometric assumptions can be found in \cite{AB}, \cite{BMR}, \cite{MM} and \cite{PRS}.  

First, we present two rigidity results for complete $n$-dimensional self-shrinkers in $\real^{n+1}$ with open and nonempty $W$.

\begin{theorem}\label{theo-1}
Let $\Sigma^n$ be a complete, $n$-dimensional, self-shrinker of $\real^{n+1}.$ If the set $W$ is open and nonempty, and the squared matrix norm $\|A\|^2$ of the second fundamental form $A$ of $\Sigma^n$ satisfies
\[
\|A\|^2\leq \frac{1}{2},
\] 
then $\Sigma^n=\s^{p}(\sqrt{2p})\times\real^{n-p},$ $0\leq p \leq n.$ 
\end{theorem}

If $\|A\|^2\geq 1/2$ and, additionally, we assume the mean curvature $H\geq 0,$ we have

\begin{theorem}\label{theo-2-1}
Let $\Sigma^n$ be a complete, $n$-dimensional, self-shrinker of $\real^{n+1}.$ If the set $W$ is open, $0\in W$, and the squared matrix norm $\|A\|^2$ of the second fundamental form $A$ of $\Sigma^n$ satisfies
\[
\|A\|^2\geq \frac{1}{2},
\]
then $\Sigma^n=\s^{p}(\sqrt{2p})\times\real^{n-p},$ $1\leq p \leq n.$
\end{theorem}

\begin{remark}
Analyzing geometrically, one can see that self-shrinkers of the form $\Sigma^n=\Gamma\times\real^{n-1},$ where $\Gamma$ is a closed Abresch-Langer curve (see \cite{A-L} and \cite{Hall}) satisfies $W$ open and $0\in W$. But all these examples satisfies
\[
\min \|A\|^2<\frac{1}{2}<\max\|A\|^2
\]
after normalization.
\end{remark}

\begin{remark} 
The bound $1/2$ seens natural for self-shrinkers. In \cite{Cao-Li}, Cao and Li proved that the only complete $n$-dimensional self-shrinkers of $\real^{n+k}$ with polynomial volume growth and such that $\|A\|^2\leq1/2$ are $\s^{p}(\sqrt{2p})\times\real^{n-p},$ $0\leq p \leq n.$ Cheng and Peng, see \cite{C-P}, and Rimoldi, see \cite{R}, with the aim to remove the hypothesis of polyminal volume growth, proved that the only self-shrinker of $\real^{n+1}$ with $\sup_{\Sigma^n} \|A\|^2=1/2$ (but with $\|A\|^2<1/2)$ is a hyperplane. On the other hand, there are other rigidity results where the bound $\|A\|^2\geq 1/2$ appears see, for example, \cite{Li-Wei}, \cite{DX}, \cite{Cheng-Wei}, and \cite{X-X}. Again, in common, all the last four references assumes polynomial volume growth or the immersion is proper. 
\end{remark}

\begin{remark}
Submanifolds with $W\neq\emptyset$ is a class of submanifolds distinct from those with polynomial of volume growth or those which are proper. In fact, cylinders over curves $\Gamma$ in $\real^2$ parametrized by $\Gamma(t)=b(t)(\cos t,\sin t),$ where $b(t)=1+e^{-t},$ or the family
\begin{equation}\label{arctan}
b(t)=d+\frac{m}{\pi}\left(\frac{\pi}{2}-\arctan(at)\right),\ d>0,\ m>0, \ 0<a\leq 1.
\end{equation}
Some straightforward calculation can prove that $\Sigma^n=\Gamma\times\real^{n-1}$ satisfies $W=\mathbb{D}^2(d)\times\real,$ and thus, nonempty. Here $\mathbb{D}^2=\{(x,y)\in\real^2;x^2+y^2\leq d^2\}$ is the closed disk of radius $d$. Moreover, this hypersurface is non proper, since it is asymptotic to the cylinder of radius $d$ (and, thus, with volume growth bigger than polynomial, by the results of Cheng-Vieira-Zhou, see \cite{CVZ}). Since the curvature of $\Gamma$ lies in the interval $((d+m)^{-1},d^{-1}),$ we can chose these hypersurfaces satisfying the assumptions on $\|A\|^2$ of Theorems \ref{theo-1} and \ref{theo-2-1}. 
\end{remark}

If $\Sigma^2$ has dimension two, then we can consider arbitrary codimension. We will assume further that ${\bf H}\neq 0$ and that ${\bf H}/\|{\bf H}\|$ is parallel at the normal bundle.

\begin{theorem}\label{theo-2}
Let $\Sigma^2$ be a complete, two-dimensional, self-shrinker of $\real^{2+k},$ $k\geq2,$ with mean curvature vector ${\bf H}\neq 0$ and such that ${\bf H}/\|{\bf H}\|$ is parallel at the normal bundle. If the set $W$ is open and nonempty and the squared matrix norm $\|A\|^2$ of the second fundamental form $A$ of $\Sigma^n,$ relative to ${\bf H}/\|{\bf H}\|,$ satisfies one of the following conditions:
\begin{itemize}
\item[i)] $\|A\|^2\leq 1/2;$
\item[ii)] $\|A\|^2\geq 1/2$ and $0\in W;$
\end{itemize} 
then $\Sigma^2=\s^{p}(\sqrt{2p})\times\real^{2-p},$ $1\leq p \leq 2.$ 
\end{theorem}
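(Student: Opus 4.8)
The plan is to reduce the two-dimensional higher-codimension case to the hypersurface situation already handled by Theorems \ref{theo-1} and \ref{theo-2-1}. The key structural hypothesis is that $\xi:={\bf H}/|{\bf H}|$ is a well-defined unit normal field that is parallel in the normal bundle, i.e. $\nabla^\perp\xi=0$. The natural first step is to exploit this to show that $\Sigma^2$ effectively lives in a codimension-one situation. Since ${\bf H}=-\tfrac12 X^\perp$, the self-shrinker equation gives $|{\bf H}|\,\xi=-\tfrac12 X^\perp$, so $X^\perp$ is everywhere a multiple of $\xi$; together with $\nabla^\perp\xi=0$ I would argue that the position vector $X$ stays, up to the tangential part, inside the subbundle spanned by $\xi$. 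The shape operator $A_\eta$ associated to any normal direction $\eta$ perpendicular to $\xi$ should then be forced to vanish, which is the crucial reduction: the only nontrivial part of the second fundamental form is $A:=A_\xi$, relative to $\xi$, and this is exactly the $A$ appearing in the statement.

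Once that reduction is in place, I would verify that $\Sigma^2$ satisfies, with respect to $\xi$, precisely the structure of a two-dimensional self-shrinker of an effective ambient $\R^{3}$: namely that the mean curvature $H:=\langle{\bf H},\xi\rangle=|{\bf H}|$ is nonnegative (indeed $H=|{\bf H}|>0$ by the hypothesis ${\bf H}\neq0$), and that the scalar second fundamental form relative to $\xi$ obeys the same Simons-type identity driving the proofs of the two hypersurface theorems. The geometric hypothesis on $W$ transfers verbatim, since the tangent affine subspaces $X(p)+dX_p(T_p\Sigma^2)$ are the same objects regardless of codimension, so $W=\R^{2+k}\setminus\bigcup_p T_p\Sigma^2$ being open and nonempty is unchanged. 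With $H=|{\bf H}|>0\geq0$ automatically satisfied, case (ii) $|A|^2\geq1/2$ falls directly under the hypothesis of Theorem \ref{theo-2-1} (read in the reduced codimension-one setting), and case (i) $|A|^2\leq1/2$ under Theorem \ref{theo-1}. Both conclude $\Sigma^2=\s^p(\sqrt{2p})\times\R^{2-p}$; the range $1\leq p\leq2$ (rather than $0\leq p\leq2$) is forced because $p=0$ would give a plane through the origin, on which ${\bf H}\equiv0$, contradicting ${\bf H}\neq0$.

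The main obstacle I anticipate is the reduction step: rigorously showing that $\nabla^\perp\xi=0$ together with the self-shrinker equation forces all normal curvature directions orthogonal to $\xi$ to be flat, so that the problem genuinely collapses to codimension one. The standard tool here is the Codazzi equation in the normal bundle: differentiating $\langle{\bf H},\eta\rangle$ for $\eta\perp\xi$ and using $\nabla^\perp\xi=0$ should show $A_\eta$ is parallel and trace-free, and then an argument using the self-shrinker equation (the fact that $X^\perp\parallel\xi$) forces $A_\eta\equiv0$. Care is needed because parallelism of $\xi$ in the normal bundle does not a priori mean the normal connection is flat in all directions; one must genuinely use that ${\bf H}$ points along $\xi$ and that $\Sigma^2$ is a self-shrinker, not merely that $\xi$ is parallel. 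I would also need to confirm that the scalar invariant $|A|^2$ relative to $\xi$, as used in the hypothesis, coincides with the full $|A|^2$ once the orthogonal components vanish, so that the numerical thresholds match those in Theorems \ref{theo-1} and \ref{theo-2-1} exactly.
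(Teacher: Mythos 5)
Your overall strategy --- kill the shape operators orthogonal to $\xi={\bf H}/|{\bf H}|$, reduce to a hypersurface of a three-dimensional subspace, and quote Theorems \ref{theo-1} and \ref{theo-2-1} --- is genuinely different from the paper's, but as written it has a real gap: the claim that ``the geometric hypothesis on $W$ transfers verbatim'' is false. Grant the reduction: once $A_\beta\equiv0$ for every normal direction orthogonal to $\xi$ and $\n^\perp\xi=0$, the subbundle $T\Sigma\oplus\R\xi$ is parallel in $\R^{2+k}$, and since $X^\perp=-2{\bf H}$ is proportional to $\xi$, the position vector lies in this subbundle; so $\Sigma^2$ sits inside a fixed linear subspace $V_0\cong\R^3$ through the origin, as a self-shrinker hypersurface with $H=|{\bf H}|>0$, and every affine tangent plane lies in $V_0$. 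But precisely because of this, $W\supseteq\R^{2+k}\setminus V_0$ is \emph{automatically} nonempty --- the hypothesis $W\neq\emptyset$ becomes vacuous after the reduction --- whereas Theorems \ref{theo-1} and \ref{theo-2-1}, applied inside $V_0$, require the omitted set $W_3=V_0\setminus\bigcup_p T_p\Sigma$ in the \emph{three-dimensional} ambient to be nonempty (the proof of Theorem \ref{theo-1} begins by choosing $p_0\in W$). Nonemptiness of $W$ in $\R^{2+k}$ does not imply $W_3\neq\emptyset$: the tangent planes could a priori cover all of $V_0$. Openness does transfer ($\bigcup_p T_p\Sigma$ is closed in $\R^{2+k}$ if and only if it is closed in $V_0$), so your route might be salvageable by checking that the attainment arguments in the hypersurface proofs really use only closedness of $\bigcup_p T_p\Sigma$ together with the bound $|f|\leq\sqrt{2n}$; but that means reopening those proofs, not citing the theorems as black boxes.

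On the step you flag as the main obstacle: the paper disposes of it in one line, and not via your suggested ``Codazzi $\Rightarrow A_\eta$ parallel'' route (parallelism of $A_\eta$ does not follow from $\n^\perp\xi=0$). Taking the normal component of $\vp U=\overline{\n}_UX$ yields identity (\ref{x-norm-2}), $f\,s_{1\beta}(U)=-\lan A_\beta X^\top,U\ran$; with $\n^\perp\xi=0$ this gives $A_\beta X^\top=0$, and since $\tr A_\beta=\lan{\bf H},\eta_\beta\ran=0$ and $\dim\Sigma=2$, a trace-free symmetric operator with nontrivial kernel vanishes (wherever $X^\top\neq0$), so $A_\beta\equiv0$. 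More importantly, the paper never reduces codimension at all: it works directly in $\R^{2+k}$, noting $f=\lan X,\xi\ran=2|{\bf H}|>0$, obtaining $\LL f=\left(\tfrac12-|A|^2\right)f$ from Corollary \ref{L-prop}, using the $W$-hypothesis via the projection argument of Theorem \ref{theo-2-1} to show that $\sup f$ (case i) or $\inf f$ (case ii) is attained, and applying Hopf's maximum principle to conclude $f$ is constant, whence $|A|^2=1/2$. It then invokes the Ding--Xin Simons-type identity, which with $A_\beta=0$ reads $\LL|A|^2=2|\n A|^2+|A|^2-2|A|^4$, to force $\n A=0$; thus $\Sigma^2$ is isoparametric and equals $\s^1(\sqrt2)\times\R$ or $\s^2(2)$. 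This bypasses both the $W$-transfer problem and the Dajczer--Tojeiro classification used in Theorem \ref{theo-1}. Your exclusion of $p=0$ via ${\bf H}\neq0$ is correct but not needed on the paper's route.
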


\begin{remark}
If we assume that $\Sigma^2$ is compact without boundary in Theorem \ref{theo-2}, then the hypothesis that $W$ is open can be removed. We point out that Smoczyk, see \cite{Smoczyk}, proved that the only compact self-shrinkers, without boundary, of $\real^{n+p}$ with ${\bf H}\neq 0$ and ${\bf H}/\|{\bf H}\|$ parallel in the normal bundle are minimal surfaces of the sphere $\s^{n+p-1}(\sqrt{2n}).$
\end{remark}

\begin{remark}
Drugan and Kleene in \cite{D-K} proved the existence of infinitely many rotational self-shrinkers of each topological type of $\s^n,$ $\s^{n-1}\times\s^1,$ $\real^n$ and $\s^{n-1}\times\real.$ Analyzing geometrically the picture of the profile curves presented there, we can see that the rotational self-shrinkers obtained by the rotation of those profiles curves have empty $W.$ We also remark that all these examples are not embedded since Kleene and M\o ller, see \cite{K-M}, proved that the sphere of radius $\sqrt{2n},$ the plane, and the right cylinder of radius $\sqrt{2(n-1)}$ are the only embedded rotational self-shrinkers of their respective topological type. 
\end{remark}

We conclude this paper with a non existence result for self-expanders with $W$ open and nonempty. Recall that a $n$-dimensional submanifold $X:\Sigma^n\to\real^{n+k}$ is called a self-expander if it satisfies
\[
{\bf H} = \frac{1}{2}X^\perp.
\]

\begin{theorem}\label{theo-4}
There is no complete, non compact, $n$-dimensional self-expanders of $\real^{n+k},$ $k\geq1,$ with principal normal vector field ${\bf H}/\|{\bf H}\|$ parallel in the normal bundle, and such that the set $W$ is open and $0\in W$. 
\end{theorem}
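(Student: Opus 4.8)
The plan is to argue by contradiction, turning the hypotheses into a positive function that is simultaneously bounded away from zero and a strict supersolution of the natural drift Laplacian. Suppose such a self-expander $X\colon\Sigma^n\to\R^{n+k}$ existed. Since ${\bf H}\neq 0$, put $\xi={\bf H}/|{\bf H}|$ and $H=|{\bf H}|>0$. The self-expander equation ${\bf H}=\tfrac12 X^\perp$ gives at once $X^\perp=2{\bf H}=2H\xi$, whence $u:=\langle X,\xi\rangle=\langle X^\perp,\xi\rangle=2H=|X^\perp|>0$ on all of $\Sigma$. In particular $X^\perp$ never vanishes, so the origin is never tangent to $X(\Sigma)$; that is, $0\in W$. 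As $W$ is open, some ball $B(0,\ve)\subset W$, and since the distance from $0$ to the affine tangent plane $T_p\Sigma$ equals $|X^\perp(p)|=u(p)$, this yields the uniform bound $u=\langle X,\xi\rangle\ge\ve>0$ on $\Sigma$. This is the exact analogue, for expanders, of the role a point of $W$ plays in the self-shrinker theorems, with the bonus that here the distinguished point $0$ comes for free.

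Next I would compute the drift Laplacian $\LL f=\Delta f+\tfrac12\langle X,\nabla f\rangle$, associated with the weight $e^{|X|^2/4}$, of the function $u$. Using that $\xi$ is parallel in the normal bundle (so $\overline\nabla_{e_i}\xi=-A_\xi(e_i)$ is tangential), that $X^\perp=2H\xi$, and the Codazzi relation $\di A_\xi=\nabla H$, a computation parallel to the Colding--Minicozzi identities gives
\[
\LL u=-\tfrac12\,u\,\bigl(1+2|A|^2\bigr),
\]
where $|A|^2$ denotes the squared norm of the second fundamental form relative to $\xi$. Combined with $u\ge\ve$, this produces the strict differential inequality $\LL u\le-\tfrac12 u\le-\tfrac{\ve}{2}<0$ on all of $\Sigma$.

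The contradiction is then obtained from a generalized, Omori--Yau type, weak maximum principle for $\LL$. Because $u\ge\ve$, the function $-u$ is bounded above, so the principle furnishes a sequence $p_k$ with $\LL(-u)(p_k)\le 1/k$. But $\LL(-u)=\tfrac12 u(1+2|A|^2)\ge\tfrac{\ve}{2}>0$ everywhere, and letting $k\to\infty$ forces $\tfrac{\ve}{2}\le 0$, which is absurd. Hence no such self-expander can exist.

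The main obstacle is the analytic validity of this maximum principle on a \emph{complete but possibly non-proper} $\Sigma$: unlike the self-shrinker case, the weight $e^{|X|^2/4}$ grows and the weighted volume is infinite, so the usual integration-by-parts shortcut is unavailable. The natural barrier is $\gamma=\tfrac12|X|^2$, for which one computes $\LL\gamma=n+\tfrac12|X|^2=n+\gamma$, precisely the linear growth required by a Pigola--Rigoli--Setti type criterion; the delicate point is that, properness not being assumed, $\gamma$ need not be an exhaustion of $\Sigma$, so one must either run the argument through the intrinsic distance (estimating $\LL r^2$ via the Bakry--Émery Ricci curvature) or verify that completeness together with $|X^\perp|\ge\ve$ already validates the weak maximum principle. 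For compact $\Sigma$ this step is vacuous, which is exactly why the genuinely new content resides in the non-compact case singled out by the statement.
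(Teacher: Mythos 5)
Your reduction is sound as far as it goes, and your differential identity is exactly the paper's: with $\eta={\bf H}/|{\bf H}|$ parallel, $\vp=1$ and $\ve=1/2$, equation \eqref{L-eq2} of Corollary \ref{L-prop} gives $\LL f+\left(|A|^2+\tfrac12\right)f=0$ for $f=\lan X,\eta\ran=2|{\bf H}|>0$, which coincides with your $\LL u=-\tfrac12 u\left(1+2|A|^2\right)$. Your observation that ${\bf H}\neq 0$ forces $0\in W$ (so nonemptiness is automatic here), and that openness then yields a ball $B(0,\ve)\subset W$ and hence the uniform bound $u=\dist(0,T_p\Sigma)\geq\ve$, is correct and rather elegant. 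The genuine gap is the last step: the Omori--Yau/weak maximum principle for $\LL$ that you invoke is precisely what is unavailable here, and you acknowledge it without resolving it. The theorem assumes neither properness nor any bound on $|A|$; your barrier $\gamma=\tfrac12|X|^2$ (whose computation $\LL\gamma=n+\gamma$ is correct) fails to be an exhaustion exactly in the non-proper case, which cannot be excluded since Section \ref{example} of the paper constructs non-proper surfaces with $W\neq\emptyset$; and the intrinsic-distance/Bakry--\'Emery route needs Ricci-type lower bounds which, by the Gauss equation, would require control of $|A|$ that is not hypothesized. Moreover, $u\geq\ve$ together with $\LL u\leq-\tfrac{\ve}{2}$ is not self-contradictory on a general complete weighted manifold: the weak maximum principle for functions bounded from one side amounts to stochastic completeness of the associated drift diffusion, and when that fails such strict supersolutions bounded away from zero can exist. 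So the analytic crux of the theorem is left unproved.

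What you missed is that openness of $W$ buys strictly more than the bound $u\geq\ve$: it buys \emph{attainment} of the infimum, which is how the paper closes the argument using only the classical Hopf principle, with no maximum principle at infinity at all. Take $p_k$ with $u(p_k)\ria m=\inf_{\Sigma}u$ and let $q_k$ be the foot of the perpendicular from the origin to the affine tangent plane $T_{p_k}\Sigma$, so that $|q_k|=\dist(0,T_{p_k}\Sigma)=|X^\perp(p_k)|=u(p_k)$ is bounded. Each $q_k$ lies in $\bigcup_{p\in\Sigma}T_p\Sigma=\R^{n+k}\setminus W$, which is closed because $W$ is open; hence a subsequence converges to some $q_1\in T_{p_1}\Sigma$, and $u(p_1)=\dist(0,T_{p_1}\Sigma)\leq |q_1|=m$ forces $u(p_1)=m$. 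Since $\LL u\leq 0$ and $u$ attains its minimum, item (i) of Lemma \ref{hopf} makes $u$ constant, whence $\left(|A|^2+\tfrac12\right)u=0$, contradicting $u>0$. Replacing your unproven weak maximum principle by this compactness argument (the same device used in the proof of Theorem \ref{theo-2-1}) completes your proof and reduces it to the paper's.
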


\begin{remark}
Clearly, if the codimension $k=1,$ the hypothesis that ${\bf H}/\|{\bf H}\|$ is parallel in the normal bundle is automatically satisfied and can be omitted in the statement of Theorem \ref{theo-4}.
\end{remark}

\begin{remark}
Analyzing geometrically, one can see that self-expanders of the form $\Sigma^n=\Gamma\times\real^{n-1},$ where $\Gamma$ is a self-expanding curve classified by Halldorsson, see \cite{Hall}, satisfies $H>0$ and $W$ is open and nonempty, but $0\not\in W$, which implies that the hypothesis of $0\in W$ is crucial for the validity of Theorem \ref{theo-4}.
\end{remark}

\begin{remark}
It is well known, see \cite{Cao-Li}, that there is no compact self-expanders in $\real^{n+k}.$ Thus, Theorem \ref{theo-4} does not make sense for $\Sigma^n$ compact.
\end{remark}

\section{Preliminaries}

Let $i:\Sigma^n\to M^{n+k},\ n\geq 2,\ k\geq1,$ be an isometric immersion, where $\Sigma^n$ and $M^{n+k}$ are Riemannian manifolds and the superscripts denote the dimension. Denote by $\n$ and $\overline{\n}$ be the connections of $\Sigma^n$ and $M^{n+k},$ respectively. We assume here that the immersion admits a conformal vector field, i.e., a vector field $X\in TM$ such that
\begin{equation}\label{conf}
\overline{\n}_YX = \vp Y,
\end{equation}
for some smooth function $\vp:M^{n+k}\to\real,$ called conformal factor of $X,$ and for every $Y\in T\Sigma^n.$ Decompose $X$ as
\[
X=X^\top+X^\perp,
\]
where $X^\top\in T\Sigma^n$ and $X^\perp\in (T\Sigma^n)^\perp.$ Here $(T\Sigma^n)^\perp$ is the normal bundle of the immersion such that $T\Sigma^n\oplus (T\Sigma^n)^\perp=TM^{n+k}.$ 

If the codimension is one, then we have $X^\perp = \lan X,N\ran N,$ where $N$ is the globally defined unitary normal vector field. If the codimension is at least two, suppose further that $X^\perp\neq 0.$ In both cases we can write


\begin{equation}\label{x-decomp}
X=X^\top + f\eta,
\end{equation}
for $f=\lan X,\eta\ran,$ where $\eta=N$ if the codimension is one and $\eta=X^\perp/\|X^\perp\|$ if the codimension is at least two. 


The immersion satisfies
\begin{equation}\label{G-W}
\overline{\n}_UV = \n_UV + \alpha(U,V) \ \mbox{and} \ \overline{\n}_U\eta=-AU + \n^\perp_U\eta,
\end{equation}
where $\lan AU,V\ran=\lan\alpha(U,V),\eta\ran,$ $\alpha$ is the second fundamental form of the immersion, and $\n^\perp$ denotes the normal connection at the normal bundle $(T\Sigma^n)^\perp$. 

The next proposition contains the basic calculations needed to prove the main theorems of this paper.

\begin{proposition}\label{L-prop-0}
Let $M^{n+k}$ be a $(n+k)$-dimensional Riemannian manifold which admits a conformal vector field $X$ with conformal factor $\vp.$ Let $\Sigma^n$ be a submanifold of $M^{n+k}$ and $\{\eta,\eta_2,\ldots,\eta_k\}$ be an orthonormal frame of the normal bundle $(T\Sigma^n)^\perp\subset TM^{n+k},$ where, for $k=1,$ $\eta=N,$ the globally defined unitary normal vector field, and for $k\geq2,$ we assume that $X^\perp\neq 0$ and take $\eta=X^\perp/\|X^\perp\|.$
If $f=\lan X,\eta\ran,$ then
\begin{equation}\label{lap-f0}
\begin{split}
\Delta f&  +\vp(\tr A) + f\|A\|^2 + \lan X^\top,\grad(\tr A)\ran\\
&=- \sum_{\beta=2}^k s_{1\beta}(A_\beta X^\top) + \sum_{\beta=2}^k s_{1\beta}(X^\top)(\tr A_\beta)\\
&\qquad + \sum_{i=1}^n\lan\overline{R}(e_i,X^\top)e_i,\eta\ran.
\end{split}
\end{equation}
Here, $A$ and $A_\beta$ are the shape operators relative to the normals $\eta$ and $\eta_\beta,$ $\beta\in\{2,\ldots,k\},$ respectively, $\|A\|^2=\tr (A^2)$ is the matrix norm of $A,$ $s_{1\beta}(X)=\lan\n^\perp_X\eta,\eta_\beta\ran,$ $\overline{R}$ is the curvature tensor of $M^{n+k},$ and $\{e_1,e_2,\ldots,e_n\}$ is an orthonormal frame of $\Sigma^n.$ 

If the immersion has codimension one (i.e., $k=1$), then
\begin{equation}\label{lap-f}
\begin{split}
\Delta f +\vp H + f\|A\|^2 + \lan X^\top,\grad H\ran = \sum_{i=1}^n\lan\overline{R}(e_i,X^\top)e_i,\eta\ran,
\end{split}
\end{equation}
where $H=\tr A$ is the mean curvature of $\Sigma^n.$ In particular, if the Ricci curvature of $M^{n+1}$ is constant (i.e., $M^{n+1}$ is an Einstein space), then
\begin{equation}\label{lap-f00}
\begin{split}
\Delta f +\vp H + f\|A\|^2 + \lan X^\top,\grad H\ran = 0.
\end{split}
\end{equation}
\end{proposition}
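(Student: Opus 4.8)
The plan is to compute $\Delta f$ directly for the conformal vector field $X$, using that $f=\langle X,\eta\rangle$. The natural starting point is to differentiate $f$ along an orthonormal frame $\{e_1,\ldots,e_n\}$ of $\Sigma^n$. Using the conformal condition \eqref{conf} and the Weingarten-type formulas \eqref{G-W}, I would first obtain the gradient of $f$: from $e_i f=\langle\overline{\n}_{e_i}X,\eta\rangle+\langle X,\overline{\n}_{e_i}\eta\rangle=\varphi\langle e_i,\eta\rangle+\langle X,-Ae_i+\n^\perp_{e_i}\eta\rangle$, and since $\langle e_i,\eta\rangle=0$, this reduces to $e_i f=-\langle X^\top,Ae_i\rangle+\langle X^\perp,\n^\perp_{e_i}\eta\rangle$. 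The second term is where the normal-connection coefficients $s_{1\beta}$ enter, because $\n^\perp_{e_i}\eta$ expands in the $\{\eta,\eta_\beta\}$ frame and pairs against $X^\perp=f\eta+\sum_\beta\langle X,\eta_\beta\rangle\eta_\beta$.

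Next I would differentiate a second time to assemble $\Delta f=\sum_i e_i(e_i f)-(\n_{e_i}e_i)f$, choosing the frame geodesic at the point of evaluation to kill the connection terms. This step generates several families of terms: differentiating the $-\langle X^\top,Ae_i\rangle$ piece produces $\varphi(\tr A)$ and $f|A|^2$ (from $\overline{\n}X=\varphi\,\mathrm{Id}$ hitting $A$, and from $\overline{\n}\eta=-A+\n^\perp\eta$ feeding back through the shape operator), along with a term $\langle X^\top,\grad(\tr A)\ran$ once one commutes derivatives and invokes the Codazzi equation to convert $\sum_i(\n_{e_i}A)e_i$ into $\grad(\tr A)$ plus a curvature correction. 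The curvature correction is precisely the $\overline{R}$ term on the right-hand side of \eqref{lap-f0}. The genuinely new bookkeeping relative to the codimension-one case is tracking how the normal-connection terms $s_{1\beta}$ interact with the shape operators $A_\beta$; differentiating $\langle X^\perp,\n^\perp_{e_i}\eta\rangle$ carefully should yield exactly $-\sum_\beta s_{1\beta}(A_\beta X^\top)+\sum_\beta s_{1\beta}(X^\top)(\tr A_\beta)$.

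I expect the main obstacle to be the clean separation of the second-derivative terms in the higher-codimension setting: when $k\geq2$, the normal connection $\n^\perp$ is nontrivial, so the derivatives of $\eta=X^\perp/|X^\perp|$ and of the auxiliary normals $\eta_\beta$ both contribute, and one must carefully use the symmetry of the second fundamental form (via Codazzi, $\langle(\n^\perp_{e_i}\alpha)(e_j,e_i),\eta\rangle$ being symmetric in the appropriate indices) to collapse the mess into the stated compact form. The curvature term arises from commuting $\overline{\n}_{e_i}\overline{\n}_{e_i}X$, so the Ricci identity together with the conformal identity $\overline{\n}X=\varphi\,\mathrm{Id}$ must be combined to isolate $\sum_i\langle\overline{R}(e_i,X^\top)e_i,\eta\rangle$; keeping track of which $\overline{\n}\varphi$ terms survive and cancel is the delicate accounting step.

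Once \eqref{lap-f0} is established, the codimension-one specialization \eqref{lap-f} is immediate: with $k=1$ the sums over $\beta$ are empty, $\tr A=H$, and the remaining curvature term stands alone. Finally \eqref{lap-f00} follows by observing that if $M^{n+1}$ is Einstein then $\sum_i\langle\overline{R}(e_i,X^\top)e_i,\eta\rangle$ is a multiple of $\langle X^\top,\eta\rangle=0$, so the right-hand side vanishes. These last two reductions are routine and require no further computation beyond substituting $k=1$ and invoking the Einstein condition.
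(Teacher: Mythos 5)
Your overall strategy---gradient of $f$ from the conformal equation and the Weingarten formula, Hessian by a second differentiation, then Codazzi to symmetrize $\sum_i(\n_{e_i}A)(e_i)$ into $\grad(\tr A)$---is the same as the paper's, and your codimension-one and Einstein reductions at the end are fine. However, there is a genuine gap in your higher-codimension bookkeeping: you claim the terms $-\sum_\beta s_{1\beta}(A_\beta X^\top)+\sum_\beta s_{1\beta}(X^\top)(\tr A_\beta)$ will come from differentiating $\lan X^\perp,\n^\perp_{e_i}\eta\ran$. In the setting of the proposition this quantity is \emph{identically zero}: for $k\geq2$ one takes $\eta=X^\perp/|X^\perp|$, so $X^\perp=f\eta$ and $\lan X,\eta_\beta\ran=0$ for every $\beta$ (your expansion $X^\perp=f\eta+\sum_\beta\lan X,\eta_\beta\ran\eta_\beta$ has no $\eta_\beta$-components here), whence $\lan X^\perp,\n^\perp_{e_i}\eta\ran=f\lan\eta,\n^\perp_{e_i}\eta\ran=0$. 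So that source produces nothing. At the same time you assert that the Codazzi correction to $\grad(\tr A)$ is ``precisely the $\overline{R}$ term,'' i.e.\ you are implicitly using the codimension-one Codazzi equation. For $k\geq 2$ the Codazzi equation for the $\eta$-component carries the normal-connection terms, as in \eqref{codazzi}:
\begin{equation*}
(\n_UA)(V)-(\n_VA)(U)=\sum_{\beta=2}^{k}\bigl[s_{1\beta}(U)A_\beta V-s_{1\beta}(V)A_\beta U\bigr]+\bigl(\overline{R}\text{-terms}\bigr),
\end{equation*}
and it is exactly these extra terms, traced against $X^\top$, that generate $-\sum_\beta s_{1\beta}(A_\beta X^\top)+\sum_\beta s_{1\beta}(X^\top)(\tr A_\beta)$ in \eqref{lap-f0}. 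As your plan stands, the $s_{1\beta}$ terms never appear, and your computation would output the codimension-one identity \eqref{lap-f} even when $k\geq2$, which is not \eqref{lap-f0}.

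A secondary inconsistency: you attribute the curvature term in one place to Codazzi (correct, and what the paper does) and in another to ``commuting $\overline{\n}_{e_i}\overline{\n}_{e_i}X$'' via the Ricci identity. The latter route is not available here, since hypothesis \eqref{conf} only gives $\overline{\n}_YX=\vp Y$ for $Y$ tangent to $\Sigma^n$, so ambient second derivatives of $X$ cannot be commuted; and even for a field conformal on all of $M$, the Ricci identity would produce terms of the form $\overline{R}(e_i,\cdot)X$ together with derivatives of $\vp$, not the term $\sum_i\lan\overline{R}(e_i,X^\top)e_i,\eta\ran$, which genuinely arises from the Codazzi symmetrization of $\n A$.
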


\begin{proof}
Let $U\in T\Sigma^n.$ Since, using (\ref{G-W}),
\[
\begin{split}
\vp U &= \overline{\n}_UX = \overline{\n}_UX^\top + (Uf)\eta + f\overline{\n}_U\eta\\
      &= \n_U X^\top + \alpha(X^\top,U) + (Uf)\eta - fAU + f\n^\perp_U \eta,    
\end{split}
\]
we have, taking the tangent and the normal parts,
\begin{equation}\label{x-tan}
\vp U = \n_UX^\top - fAU
\end{equation}
and
\begin{equation}\label{x-norm}
\alpha(X^\top,U) + (Uf)\eta + f\n^\perp_U\eta=0.
\end{equation}
From (\ref{x-tan}) we have
\begin{equation}\label{div-0}
\n_U X^\top = (\vp I + fA)U,
\end{equation}
which implies
\begin{equation}\label{x-tan-2}
\di X^\top= n\vp + f(\tr A),
\end{equation}
where $\di X^\top$ is the divergence of $X^\top$ in $\Sigma^n.$ From (\ref{x-norm}) we obtain
\begin{equation}\label{Uf}
Uf = -\lan\alpha(X^\top,U),\eta\ran,
\end{equation}
since $\lan \n^\perp_U \eta,\eta\ran=0.$ Therefore,
\begin{equation}
\grad f = - AX^\top.
\end{equation}
Let $\{\eta_1=\eta,\eta_2,\ldots,\eta_k\}$ be an orthonormal frame of $(T\Sigma^n)^\perp$ and write
\[
\n^\perp_U\eta = \sum_{\beta=2}^k s_{1\beta}(X)\eta_\beta,\ \mbox{where}\ s_{1\beta}(X)=\lan\n_X^\perp \eta,\eta_\beta\ran.
\]
Taking the inner product of (\ref{x-norm}) with $\eta_\beta,$ we have
\[
\lan\alpha(X^\top,U),\eta_\beta\ran + fs_{1\beta}(U)=0
\]
i.e.,
\begin{equation}\label{x-norm-2}
fs_{1\beta}(U)=-\lan A_{\beta}X^\top,U\ran,
\end{equation}
where $\lan A_\beta U,V\ran=\lan\alpha(U,V),\eta_{\beta}\ran.$ 

Let us calculate the Laplacian of $f.$ Since, by (\ref{Uf}), $Uf = -\lan AX^\top,U\ran,$ and using (\ref{div-0}), we obtain
\[
\begin{split}
U(Uf)&=-U\lan AX^\top,U\ran = -U\lan X^\top,AU\ran\\
     &=-\lan \n_U X^\top,AU\ran - \lan X^\top,\n_U(AU)\ran\\
     &=-\vp\lan U,AU\ran - f\lan AU,AU\ran - \lan X^\top,\n_U(AU)\ran\\
\end{split}
\]
and $(\n_UU)f = -\lan AX^\top,\n_UU\ran = -\lan X^\top,A(\n_UU)\ran.$ This implies
\[
\hess f(U,U) = -\vp\lan U,AU\ran - f\lan AU,AU\ran - \lan X^\top,(\n_UA)(U)\ran,
\]
where $(\n_U A)(V)=\n_U AV - A(\n_UV).$ Taking the trace, we have
\[
\Delta f=-\vp(\tr A) - f(\tr (A^2)) - \sum_{i=1}^n\lan X^\top(\n_{e_i}A)(e_i)\ran,
\]
where $\{e_1,e_2,\ldots,e_n\}$ is an orthonormal frame of $T\Sigma^n.$ On the other hand, the Codazzi equation
\begin{equation}\label{codazzi}
\begin{aligned}
\lan \overline{R}(U,V)W,\eta\ran &= \lan(\n_VA)(U) - (\n_UA)(V),W\ran\\
&\quad + \lan \alpha(V,W),\n^\perp_U\eta\ran - \lan\alpha(U,W),\n^\perp_V\eta\ran
\end{aligned}
\end{equation}
and
\[
\lan \alpha(V,W),\n_U^\perp \eta\ran = \sum_{\beta=2}^k s_{1\beta}(U)\lan \alpha(V,W),\eta_\beta\ran = \sum_{\beta=2}^k s_{1\beta}(U)\lan A_\beta V,W\ran
\]
give
\begin{equation}\label{codazzi-2}
\begin{aligned}
(\n_UA)(V) &= (\n_VA)(U) + \sum_{\beta=2}^k[s_{1\beta}(U)A_{\beta}V - s_{1\beta}(V)A_{\beta}U]\\
&\qquad + \sum_{i=1}^n\lan\overline{R}(U,V)\eta,e_k\ran e_k.
\end{aligned}
\end{equation}
Since $A$ is symmetric, $\n_UA$ is symmetric also, and moreover 
\[
\tr(\n_UA)=U(\tr A).
\] 
These equations give
\[
\begin{aligned}
\sum_{i=1}^n\lan X^\top,(\n_{e_i}A)(e_i)\ran&=\sum_{i=1}^n\lan(\n_{e_i}A)(X^\top),e_i\ran=\sum_{i=1}^n\lan(\n_{X^\top}A)(e_i),e_i\ran \\
&\qquad + \sum_{\beta=2}^k\sum_{i=1}^n[s_{1\beta}(e_i)\lan A_\beta X^\top,e_i\ran - s_{1\beta}(X^\top)\lan A_\beta e_i,e_i\ran]\\
&\qquad\qquad + \sum_{i=1}^n\lan \overline{R}(e_i,X^\top)\eta,e_i\ran\\
&=\tr(\n_{X^\top}A) + \sum_{\beta=2}^k\sum_{i=1}^n s_{1\beta}(e_i)\lan A_{\beta}X^\top,e_i\ran\\
&\qquad - \sum_{\beta=2}^k s_{1\beta}(X^\top)(\tr A_\beta) + \sum_{i=1}^n\lan\overline{R}(e_i,X^\top)\eta,e_i\ran\\
&=\lan X^\top,\grad(\tr A)\ran + \sum_{\beta=2}^k s_{1\beta}(A_\beta X^\top)\\
&\qquad- \sum_{\beta=2}^k s_{1\beta}(X^\top)(\tr A_\beta)  + \sum_{i=1}^n\lan\overline{R}(e_i,X^\top)\eta,e_i\ran,\\
\end{aligned}
\]
which implies
\begin{equation}
\begin{split}
\Delta f &= -\vp(\tr A) - f\|A\|^2 - \lan X^\top,\grad(\tr A)\ran\\
&\qquad - \sum_{\beta=2}^k s_{1\beta}(A_\beta X^\top) + \sum_{\beta=2}^k s_{1\beta}(X^\top)(\tr A_\beta) + \sum_{i=1}^n\lan\overline{R}(e_i,X^\top)e_i,\eta\ran,
\end{split}
\end{equation}
where $\|A\|^2=\tr (A^2)$ is the matrix norm of $A.$
\end{proof}


In the next consequence of Proposition \ref{L-prop-0}, let us assume that there exists $\ve\in\real$ such that, restricted to $\Sigma^n,$ 
\begin{equation}\label{soliton}
{\bf H}=\ve X^\perp,
\end{equation}
where ${\bf H}=\sum_{i=1}^n \alpha(e_i,e_i)$ is the mean curvature vector field of $\Sigma^n$ in $M^{n+k}.$ If $M^{n+k}=\real^{n+k},$ then $\Sigma^n$ is a mean curvature flow soliton, which is called a self-shrinker, if $\ve<0,$ and a self-expander, if $\ve>0.$ Here, we will adopt one of the canonical normalizations, considering $\ve=-\frac{1}{2}$ for self-shrinkers and $\ve=\frac{1}{2}$ for self-expanders.



If $\Sigma^n$ is submanifold of $M^{n+k}$ satisfying (\ref{soliton}), then
\begin{equation}\label{trace1}
\tr A = \ve f \ \mbox{and}\ \tr A_\beta = 0.
\end{equation}

Let us define the elliptic operator $\mathcal{L}f$ by
\begin{equation}\label{L}
\LL f = \Delta f + \ve\lan X,\grad f\ran.
\end{equation}
The next result is a direct consequence of Proposition \ref{L-prop-0}, and gives us the main equations to prove our results.

\begin{corollary}\label{L-prop}
Let $M^{n+k}$ be a $(n+k)$-dimensional Riemannian manifold which admits a conformal vector field $X$ with conformal factor $\vp.$ Let $\Sigma^n$ be a submanifold of $M^{n+k}$ such that the mean curvature vector ${\bf H}$ of $\Sigma^n$ satisfies ${\bf H}=\ve X^\perp$ for some $\ve\in\real,$ and $\{\eta,\eta_2,\ldots,\eta_k\}$ be an orthonormal frame of the normal bundle $(T\Sigma^n)^\perp\subset TM^{n+k},$ where, for $k=1,$ $\eta=N,$ the globally defined unitary normal vector field, and for $k\geq2,$ we assume that $X^\perp\neq 0$ and take $\eta=X^\perp/\|X^\perp\|.$ If $f=\lan X,\eta\ran,$ then 
\begin{equation}\label{L-eq0}
\LL f + (\|A\|^2+\ve\vp)f = -\sum_{\beta=2}^k s_{1\beta}(A_\beta X^\top) + \sum_{i=1}^n \lan\overline{R}(e_i,X^\top)e_i,\eta\ran.
\end{equation}
Here, $A$ and $A_\beta$ are the shape operators relative to the normals $\eta$ and $\eta_\beta,$ $\beta\in\{2,\ldots,k\},$ respectively, $s_{1\beta}(U)=\lan\n^\perp_U \eta,\eta_\beta\ran,$ $\overline{R}$ is the curvature tensor of $M^{n+k},$ and $\{e_1,\ldots,e_n\}$ is an orthonormal frame of $T\Sigma^n.$ Moreover, if $f\neq0,$ then
\begin{equation}\label{L-eq}
\LL f + (\|A\|^2+\ve\vp)f = \frac{1}{f}\sum_{\beta=2}^k\|A_\beta X^\top\|^2 + \sum_{i=1}^n \lan\overline{R}(e_i,X^\top)e_i,\eta\ran.
\end{equation}
In particular, if $M^{n+k}$ has constant sectional curvature and $\n^\perp\eta=0,$ or the immersion has codimension one and $M^{n+1}$ is Einstein, then
\begin{equation}\label{L-eq2}
\LL f + (\|A\|^2+\ve\vp)f =0.
\end{equation}
\end{corollary}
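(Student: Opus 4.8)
The plan is to obtain the corollary by specializing the master identity \eqref{lap-f0} of Proposition \ref{L-prop-0} to a soliton field. First I would invoke the soliton hypothesis \eqref{soliton}, which yields the trace relations \eqref{trace1}: $\tr A=\ve f$ and $\tr A_\beta=0$ for $\beta\in\{2,\dots,k\}$. Substituting these into \eqref{lap-f0} turns $\vp(\tr A)$ into $\ve\vp f$, turns $\lan X^\top,\grad(\tr A)\ran$ into $\ve\lan X^\top,\grad f\ran$, and kills the sum $\sum_\beta s_{1\beta}(X^\top)(\tr A_\beta)$ entirely.

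The only cosmetic point is identifying the drift operator: since $\grad f\in T\Sigma^n$ while $X^\perp$ is normal, we have $\lan X^\top,\grad f\ran=\lan X,\grad f\ran$, so $\Delta f+\ve\lan X^\top,\grad f\ran$ is precisely $\LL f$ from \eqref{L}. Moving the $f|A|^2$ and $\ve\vp f$ terms to the left then produces \eqref{L-eq0}.

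To reach \eqref{L-eq} when $f\neq0$, the key move I would make is to feed the particular vector $U=A_\beta X^\top$ into the normal identity \eqref{x-norm-2}, $f\,s_{1\beta}(U)=-\lan A_\beta X^\top,U\ran$. This choice is exactly what converts a linear expression into a square: the right-hand side becomes $-|A_\beta X^\top|^2$, whence $s_{1\beta}(A_\beta X^\top)=-|A_\beta X^\top|^2/f$ and the first term of \eqref{L-eq0} becomes the manifestly signed quantity $\frac1f\sum_\beta|A_\beta X^\top|^2$. This substitution is the single genuinely clever step, though it is short.

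For \eqref{L-eq2} I would treat the two right-hand terms of \eqref{L-eq0} separately. If $\n^\perp\eta=0$ then every $s_{1\beta}(\cdot)=\lan\n^\perp_{(\cdot)}\eta,\eta_\beta\ran$ vanishes, and if $M^{n+k}$ has constant sectional curvature then $\lan\overline{R}(e_i,X^\top)e_i,\eta\ran$ vanishes because the resulting terms are proportional to $\lan e_i,\eta\ran$ or $\lan X^\top,\eta\ran$, both zero by orthogonality of tangent and normal vectors; this disposes of the first alternative. In the codimension-one alternative the $\beta$-sum is empty, and completing $\{e_1,\dots,e_n\}$ to an orthonormal frame of $M^{n+1}$ with $\eta$ identifies $\sum_i\lan\overline{R}(e_i,X^\top)e_i,\eta\ran$ with $\ric(X^\top,\eta)$ (the extra diagonal term $\lan\overline{R}(\eta,X^\top)\eta,\eta\ran$ vanishing by skew-symmetry), which is a constant multiple of $\lan X^\top,\eta\ran$ and hence zero for an Einstein ambient. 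There is no substantial obstacle here: all the analytic work is already contained in Proposition \ref{L-prop-0}, and the corollary is a matter of substituting \eqref{trace1}, applying \eqref{x-norm-2} at one well-chosen vector, and tracking which inner products die by orthogonality.
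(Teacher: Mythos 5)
Your proposal is correct and follows the paper's proof essentially verbatim: substituting the trace relations \eqref{trace1} and the definition \eqref{L} into \eqref{lap-f0} to get \eqref{L-eq0}, evaluating \eqref{x-norm-2} at $U=A_\beta X^\top$ to obtain \eqref{L-eq}, and disposing of the right-hand terms via the constant-curvature symmetries and the Ricci identity $\sum_i\lan\overline{R}(e_i,X^\top)e_i,\eta\ran=\ric_M(X^\top,\eta)=0$ in the Einstein codimension-one case. Nothing is missing; the observation that $\lan X^\top,\grad f\ran=\lan X,\grad f\ran$ is exactly the cosmetic identification the paper uses implicitly.
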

\begin{proof}
By using (\ref{trace1}) and (\ref{L}) in (\ref{lap-f0}), p.\pageref{lap-f0}, we obtain (\ref{L-eq0}). Equation (\ref{L-eq}) comes from replacing (\ref{x-norm-2}) in the first term of the right hand side of (\ref{L-eq0}). To prove (\ref{L-eq2}), notice that, if $M^{n+k}$ has constant sectional curvature $\kappa_0$, then
\[
\lan\overline{R}(e_i,X^\top)e_i,\eta\ran=\kappa_0(\lan e_i,e_i\ran\lan X^\top,\eta\ran - \lan X^\top, e_i\ran\lan \eta,e_i\ran)=0,
\]
since $\lan X^\top,\eta\ran=0=\lan \eta,e_i \ran.$ Moreover, if $\n^\perp\eta=0,$ then $s_{1\beta}\equiv 0$ for every $\beta\in\{2,\ldots,k\},$ i.e., $A_\beta X^\top=0.$ On the other hand, if $\ric_M = \lambda\lan\cdot,\cdot\ran,$ $\lambda\in\real,$ then
\[
\begin{aligned}
\sum_{i=1}^n \lan\overline{R}(e_i,X^\top)e_i,\eta\ran &= \ric_M(X^\top,\eta) - \lan\overline{R}(\eta,X^\top)\eta,\eta\ran\\
&= \ric_M(X^\top,\eta)=\lambda\lan X^\top,\eta\ran=0.
\end{aligned}
\]
\end{proof}
In order to prove our results, we will also need the classical Hopf maximum principle for elliptic operators:

\begin{lemma}[Hopf's maximum principle, see \cite{Serrin}]\label{hopf}
Let
\[
Lu = \sum_{i,j=1}^n a_{ij}(x)\frac{\partial^2 u}{\partial x_i \partial x_j} + \sum_{i=1}^n b_i(x)\frac{\partial u}{\partial x_i} + c(x)u
\]
be a strictly elliptic differential operator defined in a open set $\Omega\subset\real^n.$
\begin{itemize}
\item[(i)] If $c=0$, $Lu\geq 0$ (resp. $Lu\leq0$) and there exists $\max_\Omega u$ (resp. $\min_\Omega u$), then $u$ is constant.
\item[(ii)] If $c\leq 0,$ $Lu\geq 0$ (resp. $Lu\leq0$) and there exists $\max_\Omega u\geq 0$ (resp. $\min_\Omega u\leq 0$), then $u$ is constant.
\item[(iii)] Independently of the signal of $c,$ if $Lu\geq 0$ (resp. $Lu\leq0$) and $\max_\Omega u= 0$ (resp. $\min_\Omega u=0$), then $u$ is constant.
\end{itemize}
\end{lemma}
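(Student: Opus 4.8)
The plan is to deduce all three statements from the Hopf boundary point lemma together with a sliding-ball connectedness argument. Throughout I treat the subsolution case $Lu\ge0$; the assertions for $Lu\le0$ (with minima) follow by replacing $u$ with $-u$, and I assume $\Omega$ connected, as is implicit in the conclusion that $u$ is constant. The coefficients $a_{ij},b_i,c$ are assumed locally bounded and $L$ uniformly elliptic on compact subsets, so that $\sum_{i,j}a_{ij}(x)\xi_i\xi_j\ge\lambda|\xi|^2$ for some $\lambda>0$.

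First I would establish the boundary point lemma: if $c\le0$ is bounded, $Lu\ge0$ on $\Omega$, a ball $B=B_R(y)\subset\Omega$ satisfies $u<M$ in $B$ and $u(x_0)=M:=\sup_{\overline B}u$ for some $x_0\in\partial B$, and $M\ge0$ in case $c\not\equiv0$, then the outward radial derivative obeys $\frac{\partial u}{\partial\nu}(x_0)>0$. The proof uses the barrier $v(x)=e^{-\al r^2}-e^{-\al R^2}$ with $r=|x-y|$ on the annulus $R/2\le r\le R$. A direct differentiation gives $Lv=e^{-\al r^2}\big[4\al^2\sum_{i,j}a_{ij}(x_i-y_i)(x_j-y_j)-2\al\big(\sum_i a_{ii}+\sum_i b_i(x_i-y_i)\big)\big]+cv$, and the uniform ellipticity bound makes the quadratic-in-$\al$ term dominate on $r\ge R/2$, so $Lv\ge0$ there once $\al$ is large. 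Setting $w=u-M+\ve v$ with $\ve>0$ small, one has $w\le0$ on $\partial(\text{annulus})$ (on $r=R/2$ because $u<M$, on $r=R$ because $v=0$ and $u\le M$) while $Lw=Lu-cM+\ve Lv\ge0$; the weak maximum principle yields $w\le0$ on the annulus with $w(x_0)=0$, so $\frac{\partial w}{\partial\nu}(x_0)\ge0$. Since $\frac{\partial v}{\partial\nu}(x_0)=-2\al R\,e^{-\al R^2}<0$, this forces $\frac{\partial u}{\partial\nu}(x_0)\ge-\ve\frac{\partial v}{\partial\nu}(x_0)>0$.

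Next I would prove (i) and (ii) by contradiction. Assume $u$ attains its maximum $M$ at an interior point (with $M\ge0$ when $c\le0$, automatic when $c=0$) but is nonconstant, so $\Omega^-=\{u<M\}$ is nonempty and open and $\{u=M\}$ is nonempty. Choosing $p\in\Omega^-$ with $\dist(p,\{u=M\})<\dist(p,\partial\Omega)$ and letting $B$ be the largest open ball centered at $p$ contained in $\Omega^-$, its closure lies in $\Omega$ and its boundary touches $\{u=M\}$ at a point $x_0\in\Omega$. The boundary point lemma applied to $B$ gives $\frac{\partial u}{\partial\nu}(x_0)>0$; but $x_0$ is an interior maximum of $u$, so $\n u(x_0)=0$, a contradiction. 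Hence $u\equiv M$, which is (i) and (ii).

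Finally, for (iii) I would reduce to (ii) by an algebraic splitting. Write $c=c^++c^-$ with $c^+=\max(c,0)\ge0$ and $c^-=\min(c,0)\le0$, and set $L'u=Lu-c^+u=\sum_{i,j}a_{ij}\partial_{ij}u+\sum_i b_i\partial_i u+c^-u$, an operator whose zeroth-order coefficient $c^-$ is nonpositive. Since $\max_\Omega u=0$ we have $u\le0$, hence $-c^+u\ge0$ and $L'u=Lu-c^+u\ge0$; as $\max_\Omega u=0\ge0$, part (ii) applied to $L'$ shows $u$ is constant. The main obstacle of the whole argument is the barrier estimate in the boundary point lemma—producing a subsolution on the annulus via the exponential comparison function and the uniform ellipticity bound—since the sliding-ball step is pure comparison and connectedness, and the reduction in (iii) is purely algebraic.
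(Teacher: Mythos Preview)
The paper does not supply its own proof of this lemma; it is stated with a reference to Serrin and then invoked as a black box in the proofs of the main theorems. Your argument is the standard proof---Hopf boundary point lemma via the exponential barrier on an annulus, followed by the sliding-ball/connectedness contradiction for the strong maximum principle, and the algebraic splitting $c=c^++c^-$ to reduce (iii) to (ii)---and it is correct as written (with the implicit connectedness of $\Omega$ and local boundedness/uniform ellipticity assumptions you state). There is nothing to compare against in the paper beyond the citation.
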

\section{Proof of the main theorems}

Now we are ready to proof our main theorems.

\begin{proof}[Proof of Theorem \ref{theo-1}]
In $\real^{n+1},$ the position vector is a conformal vector field with conformal factor $\vp=1.$ Since $\Sigma^n$ is a self-shrinker, we have
\begin{equation}\label{shrinker-0}
H=-\frac{1}{2}\lan X,N\ran=-\frac{1}{2}f,
\end{equation}
where $N$ is a unitary normal vector field. Since the codimension is one and $\Sigma^n$ is a self-shrinker, using Equation (\ref{L-eq2}) of Proposition \ref{L-prop} for $\ve=-1/2,$ we obtain
\begin{equation}\label{L-shrinker}
\LL f + \left(\|A\|^2-\frac{1}{2}\right)f=0.
\end{equation}
%
%
%
By using Newton's inequality 
\[
\frac{(\tr A)^2}{n}\leq \|A\|^2,
\]
Equation (\ref{shrinker-0}), and the hypothesis $\|A\|^2\leq 1/2$, we have

\begin{equation}\label{eqn-a}
f^2=4H^2=4(\tr A)^2 \leq 4n\|A\|^2\leq 2n.
\end{equation}
This implies
\[
-\sqrt{2n}\leq f\leq \sqrt{2n}
\]
and, thus, there exist $m=\inf_{\Sigma^n} f$ and $d=\sup_{\Sigma^n} f.$ If $m\leq 0,$ then
\[
\LL(f-m) + \left(\|A\|^2-\frac{1}{2}\right)(f-m)=-\left(\|A\|^2-\frac{1}{2}\right)m\leq 0.
\]
If $m=\min_{\Sigma^n} f,$ i.e., if $f$ reaches a minimum, then by the Hopf maximum principle (Lemma \ref{hopf}, item (ii)), applied to $f-m,$ we can conclude that $f$ is constant. On the other hand, if $m>0,$ then $d=\sup_{\Sigma^n} f>0.$ Thus if $f$ reaches (positive) a maximum, i.e., $d=\max_{\Sigma^n}f$ then, applying the Hopf maximum principle (Lemma \ref{hopf}, item (ii)), to equation (\ref{L-shrinker}), we conclude that $f$ is constant. 

On the other hand, Dajczer and Tojeiro, see \cite{DT}, Theorem 1, p.296, proved that the only hypersurfaces of $\real^{n+1}$ with constant support function $f$ are the cylinders, spheres and hyperplanes. The conclusion that $\Sigma^n=\s^{p}(\sqrt{2p})\times\real^{n-p},$ $0\leq p \leq n,$ comes from Equation (\ref{shrinker-0}). 

Thus we need to prove only that $f$ reaches a minimum. The proof that $f$ reaches a maximum is identical.

Since $W\neq\emptyset,$ there exists $p_0\in W,$ i.e., $p_0\not\in\bigcup_{p\in\Sigma^n}T_p\Sigma^n,$ which implies that $p-p_0\not\in T_p\Sigma^n$ for every $p\in\Sigma^n.$ Let $\{p_k\}$ be a sequence of points in $\Sigma^n$ such that $f(p_k)\to m$ when $k\to\infty.$ For each $p_k$ consider $q_k$ the projection of $p_0$ over $T_{p_k}\Sigma^n$ (see Figure \ref{Figura2}). 
\begin{figure}[ht]
	\centering
		\includegraphics[scale=0.7]{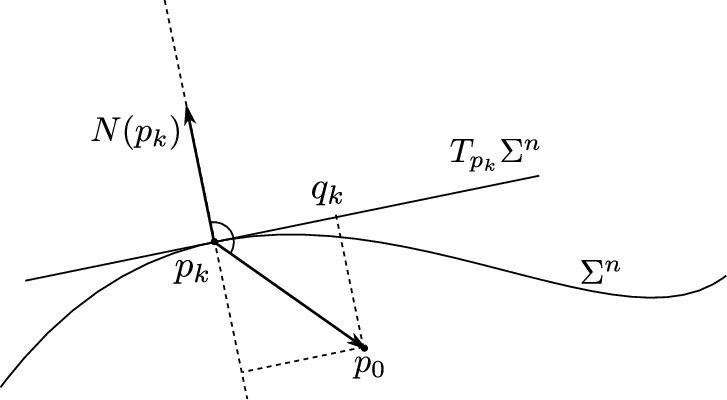}
		\caption{Projection of $p_0$ over $T_{p_k}\Sigma^n$}
\label{Figura2}
\end{figure}

Since
\[
\begin{aligned}
\dist(q_k,p_0)&=\|q_k-p_0\|=\| \proj_{N(p_k)}(p_k-p_0)\|=\lvert\lan p_k-p_0,N(p_k)\rvert\\
&= \lvert f(p_k) - \lan p_0,N(p_k)\ran\rvert\leq \lvert f(p_k)\rvert+\lvert p_0\rvert\\
&\leq \sqrt{2n} + \lvert p_0\rvert
\end{aligned}
\]
where $\proj_uv$ denotes the projection of the vector $v$ over the vector $u,$ we have that $\{q_k\}$ is a bounded sequence in $\bigcup_{p\in\Sigma^n}T_p\Sigma^n=\real^{n+1}-W.$ Moreover, since $W$ is open, we have that $\real^{n+1}-W$ is closed. Thus, passing to a subsequence if necessary, we can deduce that ${q_k}$ converges to a point $q_1\in \real^{n+1}-W.$ Let $p_1\in\Sigma^n$ such that $q_1\in T_{p_1}\Sigma^n.$ This implies
\[
f(p_1)=\lim_{k\to\infty} f(p_k)=m,
\]
i.e, $m$ is a minimum for $f.$
\end{proof}

\begin{proof}[Proof of Theorem \ref{theo-2-1}]

Since the codimension is one and $\Sigma^n$ is a self-shrinker, using Equation (\ref{L-eq2}) of Proposition \ref{L-prop} for $\ve=-1/2,$ we obtain
\[
\LL f = \left(\frac{1}{2}-\|A\|^2\right)f.
\]
Since $0\in W,$ we have that $0\not\in\bigcup_{p\in\Sigma^n}T_p\Sigma^n$ which implies that $p\not\in T_p\Sigma^n$ (seen as a vector centered at the origin) for every $p\in\Sigma^n.$ This implies that $f(p)=\lan p,N\ran\neq 0$ for every $p\in\Sigma^n,$ where $N$ denotes the unit normal vector field of $\Sigma^n$ in $\real^{n+1}.$ We assume, without loss of generality, that $f<0$ everywhere in $\Sigma^n.$ Thus, if $\|A\|^2\geq 1/2,$ then $\LL f\geq 0.$  Since $f<0,$ there exists $d=\sup_{\Sigma^n} f.$ Thus, if $f$ reaches a maximum, i.e., $d=\max_{\Sigma^n}f,$ then by using the Hopf maximum principle (Lemma \ref{hopf}, item (i)), we conclude that $f$ is constant. Therefore, we need to prove only that $f$ reaches a maximum.

Let $\{p_k\}$ be a sequence of points in $\Sigma^n$ such that $f(p_k)\to d$ when $k\to\infty.$ For each $p_k$ consider $q_k$ the projection of $p_0$ over $T_{p_k}\Sigma^n.$ Since
\[
\dist(q_k,0)=\lvert q_k\rvert=\lvert \proj_{N(p_k)} (p_k)\rvert=\lvert\lan p_k,N(p_k)\ran\rvert=-f(p_k)
\]
and $f(p_k)$ is a bounded sequence (since it converges), we have that $\{q_k\}$ is a bounded sequence in $\bigcup_{p\in\Sigma^n}T_p\Sigma^n=\real^{n+1}-W.$ Moreover, since $W$ is open, we have that $\real^{n+1}-W$ is closed. Thus, passing to a subsequence if necessary, we can deduce that ${q_k}$ converges to a point $q_1\in \real^{n+1}-W.$ Let $p_1\in\Sigma^n$ such that $q_1\in T_{p_1}\Sigma^n.$ This implies
\[
f(p_1)=\lim_{k\to\infty} f(p_k)=d,
\]
i.e, $d$ is a maximum for $f.$ Thus, $f=d$ is constant, which implies 
\[
\left(\frac{1}{2}-\|A\|^2\right)d=0.
\]
If $d=0,$ then $H=\frac{1}{2}\lan X,N\ran=0,$ which gives that $\Sigma^n$ is a hyperplane, but it contradicts the assumption $\|A\|^2\geq 1/2.$ Thus, $d<0,$ $\|A\|^2=1/2,$ and $\Sigma^n=\s^{p}(\sqrt{2p})\times\real^{n-p},$ $1\leq p \leq n,$ as in the proof of in Theorem \ref{theo-1}.
\end{proof}

\begin{proof}[Proof of Theorem \ref{theo-2}]
Since $\n^\perp \eta=0,$ where $\eta={\bf H}/\|{\bf H}\|,$ then $s_{1\beta}\equiv0,$ which implies that $A_\beta X^\top=0$ for every $\beta=2,\ldots,k.$ Since $\tr A_\beta=0$ and the dimension is two, we have that $A_\beta=0.$

On the other hand, in $\real^{2+k}$ the position vector is a conformal vector with conformal factor $\vp=1.$ In this case, since the codimension is $k\geq2$ and $\Sigma^2$ is a self-shrinker, we have
\[
f=\lan X,\eta\ran = \|X^\perp\|=2\|{\bf H}\|>0.
\]
Thus, by the Proposition \ref{L-prop}, we have, for $c=-1/2,$
\begin{equation}
\LL f = \left(\frac{1}{2}-\|A\|^2\right)f.
\end{equation}
\begin{itemize}
\item[{\bf i)}] If $\|A\|^2\leq 1/2,$ then $\LL f\geq0.$ Since $f^2\leq 4$ (see estimate (\ref{eqn-a}) in the proof of Theorem \ref{theo-1}), there exists $d=\sup_{\Sigma^2} f.$ Thus, if $f$ reaches a maximum, i.e., $d=\max_{\Sigma^2}f,$ then by using the Hopf maximum principle (Lemma \ref{hopf}, item (i)), we conclude that $f$ is constant. 

\item[{\bf ii)}] If $\|A\|^2\geq 1/2$ then $\LL f\leq 0.$ Since $f>0,$ there exists $m=\inf_{\Sigma^2} f.$ Thus, if $f$ reaches a minimum, i.e., $m=\min_{\Sigma^2}f,$ then by using the Hopf maximum principle, (Lemma \ref{hopf}, item (i)), we conclude that $f$ is constant.
\end{itemize}
The proof that $f$ reaches a maximum or a minimum is identical to that presented in the proof of Theorem \ref{theo-1} and Theorem \ref{theo-2-1}.

Thus, in both cases, $f$ is constant, which implies that $\|A\|=0$ and $\Sigma$ is plane passing through the origin, or $\|A\|^2=1/2.$ Since the second fundamental $\alpha$ satisfies

\[
\begin{aligned}
\LL\|\alpha\|^2 &= 2\|\n \alpha\|^2 + \|\alpha\|^2- 2\sum_{\beta\neq\delta}\|[A_\beta,A_\delta]\|^2\\
&\quad - 2\sum_{\beta,\delta}\left(\sum_{i,j=1}^2\lan\alpha(e_i,e_j),\eta_\beta\ran\lan\alpha(e_i,e_j),\eta_\delta\ran\right)^2\\
\end{aligned}
\]
\[
\begin{aligned}
&= 2\|\n \alpha\|^2 + \|\alpha\|^2- 2\sum_{\beta\neq\delta}\|A_\beta\circ A_\delta - A_\delta\circ A_\beta\|^2\\
&\quad - 2\sum_{\beta,\delta}\left(\sum_{i,j=1}^2\lan A_\beta(e_i),e_j\ran\lan A_\delta(e_i),e_j\ran\right)^2\\
&= 2\|\n \alpha\|^2 + \|\alpha\|^2- 2\sum_{\beta\neq\delta}\|A_\beta\circ A_\delta - A_\delta\circ A_\beta\|^2\\
&\quad - 2\sum_{\beta,\delta}(\tr(A_\beta\circ A_\delta))^2\\
\end{aligned}
\]
(see \cite{DX}, p.5069, Eq. (2.5)) and $A_\beta=0,$ $\beta=2,\ldots,k,$ we have $\|\alpha\|=\|A\|$ and
\[
\LL\|A\|^2 = 2\|\n A\|^2 + \|A\|^2 - 2\|A\|^4.
\]
Thus $\|A\|^2=1/2$ implies that $\|\n A\|^2=0.$ Therefore $\Sigma^2$ is isoparametric and thus $\Sigma^2=\s^1(\sqrt{2})\times\real$ or $\Sigma^2=\s^2(2).$
\end{proof}

%
%
%

\begin{proof}[Proof of Theorem \ref{theo-4}]
If $k\geq2,$ $\Sigma^n$ is a self-expander such that $f=2\|{\bf H}\|>0,$ and ${\bf H}/\|{\bf H}\|$ is parallel, then, by Proposition \ref{L-prop},
\[
\LL f + \left(\|A\|^2+\frac{1}{2}\right)f=0.
\]
If $k=1$ and $0\in W$, then $f\neq0$ as in the proof of Theorem \ref{theo-2-1} and we can assume $f>0.$ Thus $\LL f\leq 0.$ Since $f$ is bounded below, there exists $m=\inf_{\Sigma^n} f.$ Since $W$ is open and $0\in W$, reasoning as in the proof of Theorem \ref{theo-2-1}, we can prove that $m$ is actually a minimum. Therefore, by the Hopf maximum principle, we can see that $f$ is constant, which implies
\[
\left(\|A\|^2+\frac{1}{2}\right)f=0,
\]
but it is impossible, since $f>0.$
\end{proof}

\begin{bibdiv}
\begin{biblist}
\bib{A-L}{article}{
   author={Abresch, U.},
   author={Langer, J.},
   title={The normalized curve shortening flow and homothetic solutions},
   journal={J. Differential Geom.},
   volume={23},
   date={1986},
   number={2},
   pages={175--196},
   issn={0022-040X},
   review={\MR{845704}},
}

\bib{AB}{article}{
   author={Alencar, Hil\'{a}rio},
   author={Batista, M\'{a}rcio},
   title={Hypersurfaces with null higher order mean curvature},
   journal={Bull. Braz. Math. Soc. (N.S.)},
   volume={41},
   date={2010},
   number={4},
   pages={481--493},
   issn={1678-7544},
   review={\MR{2737313}},
   doi={10.1007/s00574-010-0022-z},
}

\bib{A-F}{article}{
   author={Alencar, Hil\'{a}rio},
   author={Frensel, Katia},
   title={Hypersurfaces whose tangent geodesics omit a nonempty set},
   conference={
      title={Differential geometry},
   },
   book={
      series={Pitman Monogr. Surveys Pure Appl. Math.},
      volume={52},
      publisher={Longman Sci. Tech., Harlow},
   },
   date={1991},
   pages={1--13},
   review={\MR{1173029}},
}

\bib{AS}{article}{
   author={Arezzo, Claudio},
   author={Sun, Jun},
   title={Self-shrinkers for the mean curvature flow in arbitrary codimension},
   journal={Math. Z.},
   volume={274},
   date={2013},
   number={3-4},
   pages={993--1027},
   issn={0025-5874},
   review={\MR{3078255}},
   doi={10.1007/s00209-012-1104-y},
}

\bib{BMR}{article}{
   author={Bianchini, Bruno},
   author={Mari, Luciano},
   author={Rigoli, Marco},
   title={Spectral radius, index estimates for Schr\"{o}dinger operators and
   geometric applications},
   journal={J. Funct. Anal.},
   volume={256},
   date={2009},
   number={6},
   pages={1769--1820},
   issn={0022-1236},
   review={\MR{2498559}},
   doi={10.1016/j.jfa.2009.01.021},
}
\bib{Cao-Li}{article}{
   author={Cao, Huai-Dong},
   author={Li, Haizhong},
   title={A gap theorem for self-shrinkers of the mean curvature flow in
   arbitrary codimension},
   journal={Calc. Var. Partial Differential Equations},
   volume={46},
   date={2013},
   number={3-4},
   pages={879--889},
   issn={0944-2669},
   review={\MR{3018176}},
   doi={10.1007/s00526-012-0508-1},
}

\bib{C-P}{article}{
   author={Cheng, Qing-Ming},
   author={Peng, Yejuan},
   title={Complete self-shrinkers of the mean curvature flow},
   journal={Calc. Var. Partial Differential Equations},
   volume={52},
   date={2015},
   number={3-4},
   pages={497--506},
   issn={0944-2669},
   review={\MR{3311901}},
   doi={10.1007/s00526-014-0720-2},
}

\bib{Cheng-Wei}{article}{
   author={Cheng, Qing-Ming},
   author={Wei, Guoxin},
   title={A gap theorem of self-shrinkers},
   journal={Trans. Amer. Math. Soc.},
   volume={367},
   date={2015},
   number={7},
   pages={4895--4915},
   issn={0002-9947},
   review={\MR{3335404}},
   doi={10.1090/S0002-9947-2015-06161-3},
}

\bib{CZ}{article}{
   author={Cheng, Xu},
   author={Zhou, Detang},
   title={Volume estimate about shrinkers},
   journal={Proc. Amer. Math. Soc.},
   volume={141},
   date={2013},
   number={2},
   pages={687--696},
   issn={0002-9939},
   review={\MR{2996973}},
   doi={10.1090/S0002-9939-2012-11922-7},
}

\bib{CVZ}{article}{
   author={Cheng, Xu},
   author={Vieira, Matheus},
   author={Zhou, Detang},
   title={Volume Growth of Complete Submanifolds in Gradient Ricci Solitons with Bounded Weighted Mean Curvature},
   journal={Internat. Math. Res. Notices},
   date={2019},
   issn={1073-7928},
   doi = {10.1093/imrn/rnz355},
}

\bib{Colding-Minicozzi}{article}{
   author={Colding, Tobias H.},
   author={Minicozzi, William P., II},
   title={Generic mean curvature flow I: generic singularities},
   journal={Ann. of Math. (2)},
   volume={175},
   date={2012},
   number={2},
   pages={755--833},
   issn={0003-486X},
   review={\MR{2993752}},
   doi={10.4007/annals.2012.175.2.7},
}

\bib{DT}{article}{
   author={Dajczer, Marcos},
   author={Tojeiro, Ruy},
   title={Hypersurfaces with a constant support function in spaces of
   constant sectional curvature},
   journal={Arch. Math. (Basel)},
   volume={60},
   date={1993},
   number={3},
   pages={296--299},
   issn={0003-889X},
   review={\MR{1201645}},
   doi={10.1007/BF01198815},
}
		
\bib{DX}{article}{
   author={Ding, Qi},
   author={Xin, Y. L.},
   title={The rigidity theorems of self-shrinkers},
   journal={Trans. Amer. Math. Soc.},
   volume={366},
   date={2014},
   number={10},
   pages={5067--5085},
   issn={0002-9947},
   review={\MR{3240917}},
   doi={10.1090/S0002-9947-2014-05901-1},
}

\bib{DXY}{article}{
   author={Ding, Qi},
   author={Xin, Y. L.},
   author={Yang, Ling},
   title={The rigidity theorems of self shrinkers via Gauss maps},
   journal={Adv. Math.},
   volume={303},
   date={2016},
   pages={151--174},
   issn={0001-8708},
   review={\MR{3552523}},
   doi={10.1016/j.aim.2016.08.019},
}

\bib{D-K}{article}{
   author={Drugan, Gregory},
   author={Kleene, Stephen J.},
   title={Immersed self-shrinkers},
   journal={Trans. Amer. Math. Soc.},
   volume={369},
   date={2017},
   number={10},
   pages={7213--7250},
   issn={0002-9947},
   review={\MR{3683108}},
   doi={10.1090/tran/6907},
}


\bib{Hall}{article}{
   author={Halldorsson, Hoeskuldur P.},
   title={Self-similar solutions to the curve shortening flow},
   journal={Trans. Amer. Math. Soc.},
   volume={364},
   date={2012},
   number={10},
   pages={5285--5309},
   issn={0002-9947},
   review={\MR{2931330}},
   doi={10.1090/S0002-9947-2012-05632-7},
}

\bib{halpern}{article}{
   author={Halpern, Benjamin},
   title={On the immersion of an $n$-dimensional manifold in
   $n+1$-dimensional Euclidean space},
   journal={Proc. Amer. Math. Soc.},
   volume={30},
   date={1971},
   pages={181--184},
   issn={0002-9939},
   review={\MR{286116}},
   doi={10.2307/2038246},
}

\bib{H-K}{article}{
   author={Hasanis, Thomas},
   author={Koutroufiotis, Dimitri},
   title={A property of complete minimal surfaces},
   journal={Trans. Amer. Math. Soc.},
   volume={281},
   date={1984},
   number={2},
   pages={833--843},
   issn={0002-9947},
   review={\MR{722778}},
   doi={10.2307/2000089},
}

\bib{Huisken}{article}{
   author={Huisken, Gerhard},
   title={Asymptotic behavior for singularities of the mean curvature flow},
   journal={J. Differential Geom.},
   volume={31},
   date={1990},
   number={1},
   pages={285--299},
   issn={0022-040X},
   review={\MR{1030675}},
}

\bib{K-M}{article}{
   author={Kleene, Stephen},
   author={M\o ller, Niels Martin},
   title={Self-shrinkers with a rotational symmetry},
   journal={Trans. Amer. Math. Soc.},
   volume={366},
   date={2014},
   number={8},
   pages={3943--3963},
   issn={0002-9947},
   review={\MR{3206448}},
   doi={10.1090/S0002-9947-2014-05721-8},
}
\bib{Li-Wei}{article}{
   author={Li, Haizhong},
   author={Wei, Yong},
   title={Classification and rigidity of self-shrinkers in the mean
   curvature flow},
   journal={J. Math. Soc. Japan},
   volume={66},
   date={2014},
   number={3},
   pages={709--734},
   issn={0025-5645},
   review={\MR{3238314}},
   doi={10.2969/jmsj/06630709},
}
\bib{MM}{article}{
   author={Mendon\c{c}a, S\'{e}rgio},
   author={Mirandola, Heudson},
   title={Hypersurfaces whose tangent geodesics do not cover the ambient
   space},
   journal={Proc. Amer. Math. Soc.},
   volume={136},
   date={2008},
   number={3},
   pages={1065--1070},
   issn={0002-9939},
   review={\MR{2361882}},
   doi={10.1090/S0002-9939-07-09282-9},
}
\bib{PRS}{article}{
   author={Pigola, Stefano},
   author={Rigoli, Marco},
   author={Setti, Alberto G.},
   title={Some applications of integral formulas in Riemannian geometry and
   PDE's},
   journal={Milan J. Math.},
   volume={71},
   date={2003},
   pages={219--281},
   issn={1424-9286},
   review={\MR{2120922}},
   doi={10.1007/s00032-003-0021-2},
}

\bib{Serrin}{article}{
   author={Pucci, Patrizia},
   author={Serrin, James},
   title={The strong maximum principle revisited},
   journal={J. Differential Equations},
   volume={196},
   date={2004},
   number={1},
   pages={1--66},
   issn={0022-0396},
   review={\MR{2025185}},
   doi={10.1016/j.jde.2003.05.001},
}
\bib{R}{article}{
   author={Rimoldi, Michele},
   title={On a classification theorem for self-shrinkers},
   journal={Proc. Amer. Math. Soc.},
   volume={142},
   date={2014},
   number={10},
   pages={3605--3613},
   issn={0002-9939},
   review={\MR{3238436}},
   doi={10.1090/S0002-9939-2014-12074-0},
}
\bib{Smoczyk}{article}{
   author={Smoczyk, Knut},
   title={Self-shrinkers of the mean curvature flow in arbitrary
   codimension},
   journal={Int. Math. Res. Not.},
   date={2005},
   number={48},
   pages={2983--3004},
   issn={1073-7928},
   review={\MR{2189784}},
   doi={10.1155/IMRN.2005.2983},
}


\bib{X-X}{article}{
   author={Xu, Hongwei},
   author={Xu, Zhiyuan},
   title={On Chern's conjecture for minimal hypersurfaces and rigidity of
   self-shrinkers},
   journal={J. Funct. Anal.},
   volume={273},
   date={2017},
   number={11},
   pages={3406--3425},
   issn={0022-1236},
   review={\MR{3706607}},
   doi={10.1016/j.jfa.2017.08.020},
}

\end{biblist}
\end{bibdiv}

\end{document}